\newtheorem{theorem}{Theorem}[section]
\newtheorem{lemma}{Lemma}[section]
\DeclareMathOperator{\argmin}{arg\,min}
\begin{document}

\title{Nonlocal Stochastic Optimal Control for Diffusion Processes: Existence, Maximum Principle and Financial Applications 
}

\author[1]{ \c{S}tefana-Lucia Ani\c{t}a}
\author[2]{Luca Di Persio}
\affil[1]{Octav Mayer Institute of Mathematics of the Romanian Academy,
Bd. Carol I 8, Ia\c{s}i 700505, Romania, stefi$\_$anita@yahoo.com}
\affil[2]{College of Mathematics - Computer Science Department - Verona University, Str. le Grazie, 15, 37134 Verona, Italy, luca.dipersio@univr.it}


\date{}

\maketitle


\begin{abstract}
This paper investigates the optimal control problem for a class of parabolic equations where the diffusion coefficient is influenced by a control function acting nonlocally. Specifically, we consider the optimization of a cost functional that incorporates a controlled probability density evolving under a Fokker-Planck equation with state-dependent drift and diffusion terms. The control variable is subject to spatial convolution through a kernel, inducing nonlocal interactions in both drift and diffusion terms. We establish the existence of optimal controls under appropriate convexity and regularity conditions, leveraging compactness arguments in function spaces. A maximum principle is derived to characterize the optimal control explicitly, revealing its dependence on the adjoint state and the nonlocal structure of the system. We further provide a rigorous financial application in the context of mean-variance portfolio optimization, where both the asset drift and volatility are controlled nonlocally, leading to an integral representation of the optimal investment strategy. The results offer a mathematically rigorous framework for optimizing diffusion-driven systems with spatially distributed control effects, broadening the applicability of nonlocal control methods to stochastic optimization and financial engineering.
\end{abstract}

\paragraph{Keywords:\\}{Stochastic Optimal Control; Controlled Fokker-Planck Equations; Diffusion Control and Maximum Principle; Nonlocal Hamiltonian Systems; Mean-Variance Portfolio Optimization}

\paragraph{MSC:\\}
{
\noindent
49J20; 49K45; 60H10; 60J60; 91G80; 93E20
}

 \bigskip
 
 
\section{Introduction}
\label{sec:intro}

In some applications, it is necessary to control the solution to a parabolic equation by acting on the diffusion
coefficient via a control with a nonlocal action. Such a problem appears, for example, when we wish to 
optimally displace a population via a control that decreases the diffusion coefficient by increasing the food resources 
or augmenting the diffusion coefficient by reducing the food resources. An application in Mathematical Finance will be discussed in Section 4.

Actually, we deal with the following optimal control problem
$$Minimize \  \int_0^T\int_{\mathbb{R}^d} G(t,x)\rho ^u(t,x) \, dx \, dt +\int_{\mathbb{R}^d}G_T(x)\rho ^u(T,x) \, dx +\int_{\mathbb{R}^d}h(u(x)) \, dx \, \leqno{\bf (P)}$$
where 
$$u\in \{ v\in L^2(\omega ) \ : \ v(x)\in [0,M_0] \ \mbox{\rm a.e. } x\in \omega \} ={\cal U} \, , $$
$M_0\in (0,+\infty )$, $\omega \subset \mathbb{R}^d$ ($d\in \mathbb{N}^*$) is an open and bounded set and $\rho ^u$ is the unique weak solution to the following Fokker-Planck equation
\begin{equation}
\left\{ \begin{array}{ll}
\displaystyle
{{\partial \rho }\over {\partial t}}(t,x)=-\nabla \cdot (b(t,x, (Su)(x))\rho (t,x)) \qquad & \\
\hspace{10mm} \displaystyle +{1\over 2}{{\partial ^2}\over {\partial x_i\partial x_j}}\Big( q_{ij}(t,x,(Su)(x))\rho (t,x)\Big), &t\in (0,T), \ x\in \mathbb{R}^d ,\vspace{2mm}\\
\rho(0,x)=\rho_0(x), & x\in \mathbb{R}^d \, .
\end{array} \right. \label{FP1}
\end{equation}
Here, $T\in (0,+\infty )$ and for any $u\in L^2(\omega )$ we denote by the same symbol $u$, the extension by $0$ in $\mathbb{R}^d\setminus \omega $. In the paper we use the Einstein summation convention.
\vspace{2mm}

\noindent
\bf Definition. \rm We say that $\rho \in L^2(0,T;H^1(\mathbb{R}^d))\cap W^{1,2}([0,T];H^{-1}(\mathbb{R}^d))$ is a weak solution to (\ref{FP1})
if
$$\begin{array}{ll}
\displaystyle
\Big\langle \frac{d\rho }{dt}(t),\varphi \Big\rangle _{H^{-1},H^1}=& \displaystyle \int_{\mathbb{R}^d}b(t,x,(Su)(x))\rho (t,x)\cdot \nabla \varphi (x)dx \vspace{2mm} \\
~ & \displaystyle -\frac{1}{2}\int_{\mathbb{R}^d}\frac{\partial }{\partial x_j}(q_{ij}(t,x,(Su)(x))\rho (t,x))\frac{\partial \varphi }{\partial x_i}(x)dx 
\end{array}$$
a.e. $t\in (0,T), \ \forall \varphi \in H^1(\mathbb{R}^d)$

\noindent
and 
$$\rho (0,x)=\rho _0(x) \quad \mbox{\rm a.e. } x\in \mathbb{R}^d \, .$$ 
\vspace{2mm}

Here $\langle \cdot ,\cdot \rangle_{H^{-1},H^1}$ is the duality between $H^{-1}(\mathbb{R}^d)$ and $H^1(\mathbb{R}^d)$.
We shall use sometimes the simplified notations $H^1$ and $H^{-1}$ for $H^1(\mathbb{R}^d)$ and $H^{-1}(\mathbb{R}^d)$, respectively and $L^{\tilde{q}}$ instead of $L^{\tilde{q}}(A)$ when is no danger of confusion. 

Recall that $L^2(0,T;H^1(\mathbb{R}^d))\cap W^{1,2}([0,T];H^{-1}(\mathbb{R}^d))\subset C([0,T];L^2(\mathbb{R}^d))$ (see \cite{barbu}).
\vspace{2mm}

For any $u\in L^2(\omega )$ and define 
$$(Su)(x)=\int_{\mathbb{R}^d}K(x-x')u(x') \, dx' \, . $$
The kernel $K$ satisfies the following assumptions 
$$K\in W^{1,\ell }(\mathbb{R}^d), \quad K(x)\geq 0, \ \forall x\in \mathbb{R}^d \, ,$$
where $\ell \in [1,2]$.
\vspace{2mm}

Let us notice first that 
\begin{itemize}
\item{} For any $u\in L^2(\omega )$ we have via Young's inequality for convolutions that $Su \in L^r(\mathbb{R}^d)$ and
$$\| Su\|_{L^r}\leq \| K\|_{L^{\ell }}\| u\|_{L^2(\omega )} \, ,$$
where $\frac{1}{r}+1=\frac{1}{\ell }+\frac{1}{2}$ (which implies that $r=\frac{2\ell }{2-\ell }\in [2,+\infty ]$).
\end{itemize}
\vspace{1mm}

On the other hand, by Lemma VIII.4, p.128 in \cite{brezis}, we get that 
\begin{itemize}
\item{} For any $u\in L^2(\omega )$:
$$\nabla (Su)(x)=\int_{\mathbb{R}^d}\nabla K(x-x')u(x') \, dx' $$
(the gradient is in the sense of distribution theory) and satisfies
$$\| \nabla (S u)\|_{L^r}\leq \| \nabla K\| _{L^{\ell }}\| u\| _{L^2(\omega )} \, .$$
\end{itemize}
We may conclude that $S\in L(L^2(\omega );W^{1,r}(\mathbb{R}^d))$.
\vspace{3mm}

\noindent
\bf Remark. \rm Let us notice that for any $u\in {\cal U}$ it follows that $u\in L^{{\ell }^*}(\omega )$, where $\frac{1}{\ell }+\frac{1}{\ell ^*}=1$ and that
by the above mentioned convention about extension outside $\omega $, that $u\in L^{\ell ^*}(\mathbb{R}^d)$. Note also that
$$\|u\| _{L^{\ell ^*}}\leq M_0 \, m(\omega )^{\frac{1}{\ell ^*}} \, ,$$
where we denote by $m(A)$ the Lebesgue measure of $A$.

By Young's inequality for convolutions, we get that for any $u\in {\cal U}$:
$$\| Su\| _{L^{\infty }}\leq \| K\| _{L^{\ell }}\| u\| _{L^{\ell ^*}}\leq M_0 \, m(\omega )^{\frac{1}{\ell ^*}}\| K\| _{L^{\ell }} \, ,$$
$$\| \nabla (Su)\| _{L^{\infty }}\leq \| \nabla K\| _{L^{\ell }}\| u\| _{L^{\ell ^*}}\leq M_0 \, m(\omega )^{\frac{1}{\ell ^*}}\| \nabla K\| _{L^{\ell }} \, .$$
\vspace{5mm}

In the present paper we work under the following hypotheses

\begin{itemize}
\item[\bf (H1)] $b\in L^{\infty }((0,T)\times \mathbb{R}^d\times [0,+\infty );\mathbb{R}^d)$, $b$ is a Borel function and for almost any $(t,x)\in (0,T)\times \mathbb{R}^d$, $b(t,x,\cdot )\in C([0,+\infty ))$;
\item[\bf (H2)] $\sigma \in C_b^2([0,T]\times \mathbb{R}^d\times [0,+\infty );\mathbb{R}^{d\times N})$, $N\in \mathbb{N}^*$,
and that there exists a positive constant $\gamma $ such that
$$\sigma _{il}(t,x,s)\sigma_{jl}(t,x,s)\xi _i\xi_j\geq \gamma |\xi |^2_d, 
\quad \forall t\in [0,T], \ x\in \mathbb{R}^d, \ s\geq 0 \, ;$$
\item[\bf (H3)] $\rho _0\in L^2(\mathbb{R}^d)\cap L^1(\mathbb{R}^d)$, $\rho _0(x)\geq 0$ a.e. $x\in \mathbb{R}^d$, $\displaystyle \int_{\mathbb{R}^d}\rho _0(x) dx = 1$;
\item[\bf (H4)] $G\in L^2((0,T)\times \mathbb{R}^d), \ G_T\in L^2(\mathbb{R}^d)$,
$$G(t,x)\geq 0 \ \mbox{\rm a.e. } (t,x)\in (0,T)\times \mathbb{R}^d, \quad G_T(x)\geq 0 \ \mbox{\rm a.e. } x\in \mathbb{R}^d \, ,$$
$h:[0,M_0]\longrightarrow [0,+\infty )$ is convex, continuous and there exists a positive constant $\alpha$ such that
$$h(s)\geq \frac{\alpha }{2}|s|^2, \quad \forall s\geq 0 \, .$$
\end{itemize}

Let 
$$q_{ij}=\sigma _{il}\sigma_{jl}\in C_b^2([0,T]\times \mathbb{R}^d\times [0,+\infty )),$$
for any $i,j\in \{ 1,2,...,d\} $. 
\vspace{2mm} 

For any $u\in {\cal U}$, it follows via Lions' theorem (see \cite{brezis}, p.218) that there exists a unique weak solution $\rho ^u$ to (\ref{FP1}).
As in \cite{anita} it follows that $\rho ^u$ is also a distributional solution to (\ref{FP1}), that $\rho ^u\in C([0,T];L^1(\mathbb{R}^d))$ (and so, it is t-narrowly continuous) and that $\rho ^u(t)$ is a probability density for any $t\in [0,T]$. By the superposition principle (see \cite{figalli,HRW,trevisan}) we get that there exists a unique (in law) probabilistically
weak solution $(X^u(t))_{t\in [0,T]}$ to  
\begin{equation}
\left\{ \begin{array}{ll}
\displaystyle
dX(t)=b(t,X(t), (Su)(X(t)))dt \vspace{2mm} \\
\hspace{3cm}+ \sigma (t,X(t), (Su)(X(t)))dW(t), \quad &t\in (0,T), \vspace{2mm}\\
X(0)=X_0,  &  
\end{array} \right. \label{SDE}
\end{equation}
such that $X^u(t)$ has the probability density $\rho ^u(t)$ for any $t\in [0,T]$. Actually, a weak solution $(X^u(t))_{t\in [0,T]}$
to (\ref{SDE}) (see \cite{oksendal,evans}) is a strong solution to (\ref{SDE}) corresponding to a probability space $({\Omega },{\cal F}, {\mathbb{P}}):=(\tilde{\Omega },\tilde{\cal F}, \tilde{\mathbb{P}})$ with normal filtration $(\tilde{\cal F}_t)_{t\in [0,T]}$ and to $({W}(t))_{t\in [0,T]}:=(\tilde{W}(t))_{t\in [0,T]}$, an $\mathbb{R}^N-(\tilde{\cal F}_t)_{t\in [0,T]}$ Brownian motion. 

So, for any $u\in {\cal U}$:
$$\int_0^T\mathbb{E}[G(t,X^u(t))] \, dt +\mathbb{E}[G_T(X^u(T))] =\int_0^T\int_{\mathbb{R}^d}G(t,x)\rho ^u(t,x)dx \, dt$$
$$+\int_{\mathbb{R}^d}G_T(x)\rho ^u(T,x)dx . $$
It follows that problem $(P)$ is equivalent to the following stochastic optimal control problem 
$$\underset{u\in {\cal U}}{Minimize} \  \int_0^T\mathbb{E}[G(t,X^u(t))] \, dt +\mathbb{E}[G_T(X^u(T))] +\int_{\mathbb{R}^d}h(u(x)) \, dx \, .\leqno{\bf (P_S)}$$
\vspace{3mm}

Consider now the stochastic differential equation (\ref{SDE}), where $({\Omega },{\cal F}, {\mathbb{P}})$ is a probability space with normal filtration $({\cal F}_t)_{t\in [0,T]}$ and $({W}(t))_{t\in [0,T]}$ is an $\mathbb{R}^N-({\cal F}_t)_{t\in [0,T]}$ Brownian motion.  

Assume that $X_0$ is an $\mathbb{R}^d$-random variable with density $\rho _0$ and that $b\in C^1_b([0,T]\times \mathbb{R}^d\times [0,+\infty );\mathbb{R}^d)$ and $K\in C_0^1(\mathbb{R}^d)$. For any $u\in {\cal U}$, problem (\ref{SDE}) admits a unique strong solution $(X^u(t))_{t\in [0,T]}$.
On the other hand (\ref{FP1}) admits a unique weak (and distributional as well) solution $\rho ^u$ (with $\rho ^u(t)$ a probability density for any $t\in [0,T]$). By Theorem 1.3 in \cite{rockner} we get that $\rho ^u(t)$ is the probability density for $X^u(t)$ for any $t\in [0,T]$. We conclude that $(P)$ and $(P_S)$ are equivalent, this time $X^u$ being the strong solution to (\ref{SDE}).  
\vspace{3mm}

We shall investigate, in what follows, problem $(P)$ under the general assumptions on $b$ and $K$.
\vspace{1cm}

The study of optimal control problems for diffusion processes where the control acts on the diffusion coefficient has been a longstanding challenge in stochastic control theory, mathematical finance, and statistical mechanics. Early foundational work on controlled diffusions, particularly in the context of the stochastic maximum principle (SMP) and dynamic programming, can be traced to Bensoussan (\cite{bensou}), Fleming and Soner (\cite{fleming}), and Yong and Zhou (\cite{yong}). These references established the key analytic and probabilistic tools necessary for tackling optimization in diffusion-driven systems, primarily under the assumption of control acting in the drift.

The study of diffusion control, however, requires more sophisticated techniques due to the degenerate nature of the associated Hamilton-Jacobi-Bellman (HJB) equations and the resulting forward-backward stochastic differential equations (FBSDEs), as well as the maximum principle in the presence of jumps and w.r.t. mean field games theory, see, e.g., \cite{li} and the seminal contributions of Øksendal and Sulem (\cite{oksendal2}), where diffusion control was systematically analyzed via FBSDEs and viscosity solutions of second-order HJB equations. More recent advances have been obtained in \cite{zhang}, where the authors study optimal stochastic control problems for fully coupled FBSDEs subjected to
anomalous subdiffusion, then obtaining the stochastic maximum principle (SMP). 

In the context of controlled Fokker-Planck equations, where the probability distribution of a stochastic system is directly controlled via drift or diffusion, recent literature has extensively developed mean-field control (MFC) and McKean-Vlasov (MKV) control problems. Lasry and Lions (\cite{lasry}) introduced the fundamental analysis of mean-field games, where control influences a distributional state variable, which naturally extends to problems where the diffusion coefficient is controlled nonlocally. Carmona and Delarue (\cite{carmona}) developed an extensive probabilistic framework for MFC problems, incorporating diffusion control, stochastic maximum principles, and HJB-Fokker-Planck systems.

The nonlocal nature of control, where diffusion depends on a spatially weighted convolution of the control function, introduces additional complexity. Such settings are relevant to study maximum entropy approximation, see \cite{bodova}, as well as for the {\it consensus-based} optimisation problems, see, e.g., \cite{HQR}, and mean-field game theory (Bensoussan, Frehse, and Yam (\cite{bensou2})). Rigorous results concerning the well-posedness of Fokker-Planck equations with nonlocal diffusion terms have been obtained by Jourdain, M\'el\'eard, and Woyczynski (\cite{jourdain}), who analyzed solutions in non-standard function spaces, ensuring stability of the density evolution.

A crucial mathematical challenge in diffusion control is establishing existence and uniqueness of optimal controls under nonlocal interactions. Here, works by Buckdahn, Li, Peng, and Rainer (\cite{buckdahn}) provide deep insights into adjoint BSDEs for stochastic control of McKean-Vlasov dynamics. The concept of mean-field optimal control introduced in \cite{fornasier} has proven effective in describing controlled aggregation-diffusion systems, a problem structurally similar to the one considered in this paper.

Finally, financial applications of diffusion control, particularly in mean-variance portfolio optimization with stochastic volatility, have been extensively studied in the works of Kraft (\cite{kraft}) and Fouque, Papanicolaou, and Sircar (\cite{fouque}). These contributions underline the importance of nonlocal control in risk-sensitive portfolio allocations, where the volatility structure of the market evolves under an optimized diffusion process.
\vspace{5mm}

Here is the structure of the paper. In section 2 we prove the existence of an optimal control for problem $(P)$. In section 3 we derive the maximum principle under more restrictive assumptions. An application in Mathematical Finance is discussed in section 4. Section 5 is devoted to some comments and further developments.

\section{Existence of an optimal control}

For any $u\in {\cal U}$ we denote by $J(u)$ the cost functional at $u$.

\begin{lemma}
If $\{ u_n\}_{n\in \mathbb{N}^*}\subset {\cal U}$ satisfies $u_n\longrightarrow u$ weakly  in $L^{\ell ^*}(\omega )$, then
$$(Su_n)(x)\longrightarrow (Su)(x), \quad \frac{\partial (Su_n)}{\partial x_i}(x)\longrightarrow \frac{\partial (Su)}{\partial x_i}(x) \quad a.e. \ x\in \mathbb{R}^d \, ,$$
for any $i\in \{1,2, ...,d\} $.
\end{lemma}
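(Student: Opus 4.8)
The plan is to prove both convergences by testing the weakly convergent sequence $u_n$ against suitable translates of $K$ and of $\nabla K$, using only the duality between $L^{\ell^*}$ and $L^{\ell}$. Fix $x\in\mathbb{R}^d$ and, recalling the convention of extension by zero outside $\omega$, write
$$(Su_n)(x)=\int_{\mathbb{R}^d}K(x-x')u_n(x')\,dx'=\int_{\omega}K(x-x')u_n(x')\,dx'.$$
This integral pairs $u_n$ with the fixed function $\phi_x:=K(x-\cdot)$. First I would check that $\phi_x\in L^{\ell}(\omega)$: since $K\in W^{1,\ell}(\mathbb{R}^d)\subset L^{\ell}(\mathbb{R}^d)$ and translation is an isometry of $L^{\ell}(\mathbb{R}^d)$, the translate $K(x-\cdot)$ lies in $L^{\ell}(\mathbb{R}^d)$ for every $x$, hence its restriction to $\omega$ lies in $L^{\ell}(\omega)$. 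Because $\tfrac{1}{\ell}+\tfrac{1}{\ell^*}=1$, the space $L^{\ell}(\omega)$ is exactly the dual of $L^{\ell^*}(\omega)$ (for $\ell>1$), so applying the definition of weak convergence $u_n\rightharpoonup u$ in $L^{\ell^*}(\omega)$ to the functional $\phi_x$ gives
$$(Su_n)(x)=\int_{\omega}\phi_x\,u_n\longrightarrow\int_{\omega}\phi_x\,u=(Su)(x),$$
and this in fact holds for every $x$.

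For the gradient I would repeat the argument with $\nabla K$ in place of $K$. By the convolution-differentiation identity recalled in the preliminaries (Brezis, Lemma VIII.4), for a.e. $x$ one has $\tfrac{\partial(Su_n)}{\partial x_i}(x)=\int_{\omega}\tfrac{\partial K}{\partial x_i}(x-x')u_n(x')\,dx'$, i.e. the pairing of $u_n$ with $\psi_x^{i}:=\tfrac{\partial K}{\partial x_i}(x-\cdot)$. Since $\nabla K\in L^{\ell}(\mathbb{R}^d)$, the same translation-isometry argument gives $\psi_x^{i}\in L^{\ell}(\omega)=(L^{\ell^*}(\omega))^*$, and weak convergence yields $\tfrac{\partial(Su_n)}{\partial x_i}(x)\to\tfrac{\partial(Su)}{\partial x_i}(x)$. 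The one bookkeeping point here is that the convolution-differentiation identity holds only outside a null set that depends on the function; I would therefore fix $N:=N_0\cup\bigcup_{n}N_n$, a countable and hence null union, outside of which the representation holds simultaneously for $u$ and for every $u_n$, so that the gradient convergence holds for all $x\notin N$, that is a.e.

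The argument is essentially immediate once the duality identification is in place, and the only genuinely delicate point — the step I expect to require care — is the endpoint $\ell=1$, where $\ell^*=\infty$ and $(L^{\infty}(\omega))^*\neq L^1(\omega)$. In that case ``weak convergence in $L^{\ell^*}(\omega)$'' must be read as weak-$*$ convergence; since $\omega$ is bounded and $K,\nabla K\in L^1$, the test functions $\phi_x$ and $\psi_x^{i}$ belong to $L^1(\omega)$, the predual of $L^{\infty}(\omega)$, so the pairings still converge and the conclusion is unchanged. For $\ell\in(1,2]$ the reflexive duality $L^{\ell}=(L^{\ell^*})^*$ applies directly and no further comment is needed.
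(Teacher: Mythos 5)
Your argument is correct and is essentially the same as the paper's: both prove the pointwise convergences by viewing $(Su_n)(x)$ and $\partial_{x_i}(Su_n)(x)$ as duality pairings of $u_n$ against the translates $K(x-\cdot)\in L^{\ell}$ and $\partial_{x_i}K(x-\cdot)\in L^{\ell}$ and then invoking the weak convergence $u_n\rightharpoonup u$ in $L^{\ell^*}$. Your additional remarks on the null-set bookkeeping for the gradient identity and on the endpoint $\ell=1$ (weak-$*$ reading) are refinements the paper leaves implicit, but they do not change the route.
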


\begin{proof}
Note first that $u\in {\cal U}$. 

Since $u_n\longrightarrow u$ weakly in $L^{\ell ^*}(\mathbb{R}^d)$, as well (where the functions in $L^{\ell ^*}(\omega )$ are extended by $0$ on $\mathbb{R}^d\setminus \omega $ and thus become elements of $L^{\ell ^*}(\mathbb{R}^d)$), then for almost any $x\in \mathbb{R}$
we get that
$$(Su_n)(x)-(Su)(x)=\int_{\mathbb{R}^d}K(x-x')(u_n(x')-u(x')) \, dx' \longrightarrow 0 $$
(we get here the duality product of $K(x-\cdot )$ and $u_n-u$) and analogously, we get that
for almost any $x\in \mathbb{R}^d$ and $i\in \{ 1,2,...,d\} $:
$$\frac{\partial }{\partial x_i}(Su_n-Su)(x)=\int_{\mathbb{R}^d}\frac{\partial K}{\partial x_i}(x-x')(u_n(x')-u(x')) \, dx' \longrightarrow 0 \, .$$
It follows the conclusion.
\end{proof}

\begin{theorem}
There exists at least one optimal control of problem $(P)$.
\end{theorem}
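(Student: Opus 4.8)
The plan is to use the direct method of the calculus of variations. First I would establish that the infimum $m := \inf_{u \in \mathcal{U}} J(u)$ is finite and nonnegative: by hypotheses (H4), every term in the cost functional is nonnegative (the integrals against $G$ and $G_T$ because $G, G_T, \rho^u \geq 0$, and the $h$-term because $h \geq 0$), so $m \in [0, +\infty)$, and it is clearly finite since, e.g., $u \equiv 0 \in \mathcal{U}$ gives a finite value. I would then fix a minimizing sequence $\{u_n\}_{n \in \mathbb{N}^*} \subset \mathcal{U}$ with $J(u_n) \to m$.

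Next I would extract weak limits and verify the limit is admissible. Since $\mathcal{U}$ is bounded in $L^{\ell^*}(\omega)$ (indeed $\|u_n\|_{L^{\ell^*}} \leq M_0\, m(\omega)^{1/\ell^*}$ by the Remark) and convex and closed, it is weakly compact; passing to a subsequence (not relabeled), $u_n \rightharpoonup u^*$ weakly in $L^{\ell^*}(\omega)$ with $u^* \in \mathcal{U}$. Invoking Lemma~2.1, we obtain the pointwise a.e.\ convergence $(Su_n)(x) \to (Su^*)(x)$ and $\partial_{x_i}(Su_n)(x) \to \partial_{x_i}(Su^*)(x)$. The central task is then to pass to the limit in the state equation: I would show that $\rho^{u_n} \rightharpoonup \rho^{u^*}$ in a suitable sense. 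The uniform bounds on $\|Su_n\|_{L^\infty}$ and $\|\nabla(Su_n)\|_{L^\infty}$ from the Remark, combined with (H1)--(H2), give uniform bounds on $\rho^{u_n}$ in $L^2(0,T;H^1)$ and on $d\rho^{u_n}/dt$ in $L^2(0,T;H^{-1})$; by the Aubin--Lions--Simon compactness lemma, a further subsequence satisfies $\rho^{u_n} \to \rho^{u^*}$ strongly in $L^2(0,T;L^2_{loc})$ and weakly in $L^2(0,T;H^1)$. Using the bounded convergence (H2 gives $q_{ij}$ continuous in $s$, H1 gives $b$ continuous in $s$) together with the a.e.\ convergence of $Su_n$, one identifies the limit in each term of the weak formulation, concluding that $\rho^{u^*}$ solves (\ref{FP1}) with control $u^*$, hence $\rho^{u^*} = \rho^{u_n}$'s limit equals the unique weak solution for $u^*$.

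Finally I would establish lower semicontinuity of $J$ along the sequence. For the two linear (in $\rho$) terms, $\int_0^T\!\int_{\mathbb{R}^d} G \rho^{u_n}\,dx\,dt + \int_{\mathbb{R}^d} G_T \rho^{u_n}(T)\,dx$, the weak $L^2$ convergence of $\rho^{u_n}$ (and of the time-$T$ trace, using the $C([0,T];L^2)$ embedding) against the fixed $L^2$ data $G, G_T$ passes directly to the limit. For the nonlocal cost $\int_{\mathbb{R}^d} h(u_n)\,dx$, I would use that $h$ is convex and nonnegative (H4), so $u \mapsto \int h(u)\,dx$ is convex and strongly lower semicontinuous, hence weakly lower semicontinuous in $L^{\ell^*}(\omega)$ (convex + strongly closed $\Rightarrow$ weakly closed epigraph); thus $\int h(u^*)\,dx \leq \liminf_n \int h(u_n)\,dx$. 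Combining, $J(u^*) \leq \liminf_n J(u_n) = m$, and since $u^* \in \mathcal{U}$ we have $J(u^*) = m$, so $u^*$ is optimal.

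The main obstacle I expect is the passage to the limit in the state equation, specifically justifying convergence of the nonlinear diffusion term $\int_{\mathbb{R}^d} \partial_{x_j}\big(q_{ij}(t,x,(Su_n)(x))\rho^{u_n}(t,x)\big)\partial_{x_i}\varphi\,dx$. This requires coupling the weak $H^1$-convergence of $\rho^{u_n}$ with the a.e.\ convergence of $Su_n$ and $\nabla(Su_n)$, while controlling the product $q_{ij}(\cdot,Su_n)\rho^{u_n}$ and its spatial derivatives; the uniform $L^\infty$ bounds on $Su_n$ restrict the arguments of $q_{ij}$ to a compact set where the $C^2_b$ regularity of (H2) yields uniform continuity, and a dominated-convergence argument then identifies the limit. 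Ensuring the strong local compactness is genuinely available (so that products converge) is the delicate point, and the noncompact domain $\mathbb{R}^d$ requires care, though the $L^1 \cap L^2$ mass control from (H3) and $\rho^u \in C([0,T];L^1)$ prevent mass escaping to infinity.
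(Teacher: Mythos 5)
Your proposal is correct in outline but takes a genuinely different route at the crucial step. The paper does not use Aubin--Lions compactness at all: it sets $z_k=\rho^{u_{n_k}}-\rho^{u^*}$, writes the linear parabolic equation satisfied by $z_k$ (with forcing terms involving the differences $b(\cdot,Su_{n_k})-b(\cdot,Su^*)$, $q_{ij}(\cdot,Su_{n_k})-q_{ij}(\cdot,Su^*)$ and their derivatives multiplied by $\rho^{u^*}$), and runs an energy estimate with the uniform ellipticity of (H2) and Gronwall's lemma; Lemma~2.1 plus dominated convergence then kills the forcing terms and yields \emph{strong} convergence $\rho^{u_{n_k}}\to\rho^{u^*}$ in $C([0,T];L^2(\mathbb{R}^d))$, after which the passage to the limit in the $G$ and $G_T$ terms is immediate. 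Your compactness-plus-identification scheme also works, and is arguably lighter on computation, but two points deserve attention. First, since $H^1(\mathbb{R}^d)\hookrightarrow L^2(\mathbb{R}^d)$ is not compact, Aubin--Lions only gives strong convergence in $L^2(0,T;L^2_{loc})$, and your appeal to mass control to ``prevent escape to infinity'' would need a uniform-in-$n$ tightness estimate that you do not supply; fortunately this step is dispensable, because the state equation is \emph{linear} in $\rho$ and the coefficients converge a.e.\ and boundedly (by Lemma~2.1 and the uniform $L^\infty$ bounds on $Su_n$, $\nabla(Su_n)$), so the product of a bounded a.e.-convergent coefficient with a weakly convergent sequence already converges weakly and identifies the limit equation. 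Second, the convergence of the terminal term $\int G_T\,\rho^{u_n}(T)\,dx$ does not follow from the embedding into $C([0,T];L^2)$ alone under mere weak convergence; you need the extra (standard) argument that $\langle\rho^{u_n}(T),\varphi\rangle=\langle\rho_0,\varphi\rangle+\int_0^T\langle \tfrac{d}{dt}\rho^{u_n},\varphi\rangle\,dt$ passes to the limit via the weak convergence of the time derivatives in $L^2(0,T;H^{-1})$. The paper's strong-convergence route buys a cleaner limit passage in the cost at the price of a long explicit estimate; your route is more modular but requires these identification details to be written out. The treatment of the convex term $\int h(u_n)$ by weak lower semicontinuity coincides with the paper's.
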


\begin{proof}
Let
$$m=\inf _{u\in {\cal U}}J(u)\in [0,+\infty ) \, $$
and $\{ u_n\} _{n\in \mathbb{N}^*}\subset {\cal U}$ such that
$$m\leq J(u_n)<m+\frac{1}{n}, \quad \forall n\in \mathbb{N}^* \, .$$
It follows that there exists a subsequence $\{ u_{n_k}\}_{k\in \mathbb{N}^*}$ and $u^*\in {\cal U}$ such that
$$u_{n_k}\longrightarrow u^*  \quad \mbox{\rm weakly in } L^2(\mathbb{R}^d) \ \mbox{\rm and  weakly }  \ \mbox{\rm in } L^{\ell ^*}(\mathbb{R}^d) \, .$$
Let us prove that
$$\rho^{u_{n_k}}\longrightarrow \rho^{u^*} \quad \mbox{\rm in } C([0,T]; L^2(\mathbb{R}^d)) $$
(where $\rho ^u$ is the weak solution to (\ref{FP1}) in the sense of the Theorem of Lions - see \cite{brezis}).
Indeed, if we denote by $z_k=\rho^{u_{n_k}}-\rho^{u^*}$, then $z_k$ is the weak solution to
\begin{equation*}
\left\{ \begin{array}{ll}
\displaystyle
\frac{\partial z }{\partial t}(t,x)=-\nabla \cdot \Big[ b(t,x,(Su_{n_k})(x))\rho ^{u_{n_k}}(t,x)-b(t,x,(Su^*)(x))\rho ^{u^*}(t,x)\Big]  \\
\hspace{5mm} \displaystyle +\frac{1}{2}\frac{\partial ^2}{\partial x_i\partial x_j}\Big[ q_{ij}(t,x,(Su_{n_k})(x))\rho^{u_{n_k}}(t,x)-q_{ij}(t,x,(Su^*)(x))\rho^{u^*}(t,x)\Big], \\
~ \hspace{5cm} t\in (0,T), \ x\in \mathbb{R}^d ,\vspace{2mm}\\
z(0,x)=0, \hspace{3cm} x\in \mathbb{R}^d 
\end{array} \right. 
\end{equation*}
and consequently also to
\begin{equation}
\left\{ \begin{array}{ll}
\displaystyle
\frac{\partial z }{\partial t}(t,x)=-\nabla \cdot (b(t,x,(Su_{n_k})(x))z (t,x)) \vspace{2mm} \\
\hspace{5mm} -\nabla \cdot (b(t,x,(Su_{n_k})(x))-b(t,x,(Su^*)(x)))\rho^{u^*}(t,x) \vspace{2mm} \\
\hspace{5mm} \displaystyle +\frac{1}{2}\frac{\partial ^2}{\partial x_i\partial x_j} \Big[ q_{ij}(t,x,(Su_{n_k})(x))z(t,x)\Big],  \vspace{2mm} \\
\hspace{5mm} \displaystyle +\frac{1}{2}\frac{\partial ^2}{\partial x_i\partial x_j}\Big[ \Big( q_{ij}(t,x,(Su_{n_k})(x))-q_{ij}(t,x,(Su^*)(x))\Big) \rho^{u^*}(t,x) \Big], \vspace{2mm} \\
~ \hspace{5cm} t\in (0,T), \ x\in \mathbb{R}^d ,\vspace{2mm}\\
z(0,x)=0, \hspace{3cm} x\in \mathbb{R}^d \, .
\end{array} \right. \label{diff_FP1}
\end{equation}
For the sake of avoiding writing excessively long formulas, we shall sometimes shorthand $q_{ij}(\tau ,x,s)$ by $q_{ij}(s )$ and $b(\tau ,x,s)$ by $b(s)$.
Since $z_k$ is the weak solution to (\ref{diff_FP1}) and using Proposition IX.5, p.155 in \cite{brezis}, we get that for any $t\in [0,T]$:
\begin{equation*}
\begin{array}{ll}
~ &\displaystyle
\hspace{3cm} \frac{1}{2}\| z_k(t)\|^2_{L^2} \vspace{2mm} \\
 = & \displaystyle  \int_0^t\int_{\mathbb{R}^d}b(\tau ,x,(Su_{n_k})(x))z_k(\tau ,x)\cdot \nabla z_k(\tau ,x) \, dx \, d\tau   \vspace{2mm} \\
 ~ & \displaystyle +\int_0^t\int_{\mathbb{R}^d}
 \Big[b(\tau ,x,(Su_{n_k})(x))-b(\tau ,x,(Su^*)(x)) \Big] \cdot \nabla z_k(\tau ,x)\rho^{u^*}(\tau ,x) \, dx \, d\tau \vspace{2mm} \\ 
 \ \ \ & \displaystyle -\frac{1}{2}\int_0^t\int_{\mathbb{R}^d}\frac{\partial z_k}{\partial x_i}(\tau ,x)\frac{\partial }{\partial x_j}[q_{ij}(\tau ,x,(Su_{n_k})(x))z_{k}(\tau ,x)] \, dx \, d\tau  \vspace{2mm} \\
 \ \ \ & \displaystyle -\frac{1}{2}\int_0^t\int_{\mathbb{R}^d}\frac{\partial z_k}{\partial x_i}(\tau ,x)\frac{\partial }{\partial x_j}[(q_{ij}(\tau ,x,(Su_{n_k})(x))-q_{ij}(\tau ,x,(Su^*)(x)))\rho^{u^*}(\tau ,x)] \, dx \, d\tau  \vspace{2mm} \\
 \displaystyle
 \leq & \displaystyle \| b\| _{L^{\infty }}\int_0^t\| z_k(\tau )\|_{L^2}\| \nabla z_k(\tau )\|_{L^2} \, d\tau  \vspace{2mm} \\
 \ \ \ & \displaystyle +\int_0^t\int_{\mathbb{R}^d}
 |b(\tau ,x,(Su_{n_k})(x))-b(\tau ,x,(Su^*)(x))| \, |\nabla z_k(\tau ,x)|\rho^{u^*}(\tau ,x) \, dx \, d\tau \vspace{2mm} \\
 \ \ \ & \displaystyle -\frac{\gamma }{2}\int_0^t \|\nabla z_{k}(\tau )\| ^2_{L^2} \, d\tau  \displaystyle -\frac{1}{2}\int_0^t\int_{\mathbb{R}^d}\frac{\partial z_k}{\partial x_i}(\tau ,x)z_k(\tau ,x)\Big[ \frac{\partial q_{ij}}{\partial x_j}(\tau ,x ,(Su_{n_k})(x)) \vspace{2mm} \\
~ & \displaystyle \hspace{3cm} +\frac{\partial q_{ij}}{\partial s}(\tau ,x,(Su_{n_k})(x))\frac{\partial (Su_{n_k})}{\partial x_j}(x)\Big] \, dx \, d\tau \vspace{2mm} \\
 & \displaystyle -\frac{1}{2}\int_0^t\int_{\mathbb{R}^d}\frac{\partial z_k}{\partial x_i}(\tau ,x)\Big\{ \frac{\partial \rho^{u^*}}{\partial x_j}(\tau ,x)[(q_{ij}(\tau ,x,(Su_{n_k})(x))-q_{ij}(\tau ,x, (Su^*)(x))] \vspace{2mm} \\
 & \displaystyle  \ \ \ + \rho^{u^*}(\tau ,x)\Big[ \frac{\partial q_{ij}}{\partial x_j}(\tau ,x,(Su_{n_k})(x))- \frac{\partial q_{ij}}{\partial x_j}(\tau ,x,(Su^*)(x)) \Big]  \vspace{2mm} \\
 & \displaystyle  \ \ \ + \rho^{u^*}(\tau ,x)\Big[ \frac{\partial q_{ij}}{\partial s}(\tau ,x,(Su_{n_k})(x))\frac{\partial (Su_{n_k})}{\partial x_j}(x) \vspace{2mm} \\
~ & \displaystyle \hspace{3cm}- \frac{\partial q_{ij}}{\partial s}(\tau ,x,(Su^*)(x))\frac{\partial (Su^*)}{\partial x_j}(x)\Big] \Big\} \, dx \, d\tau \, .
\end{array} 
\end{equation*}
After an easy calculation, we get that there exist two positive constants $\beta, \theta $ such that for any $k\in \mathbb{N}^*$:
\begin{equation*}
\begin{array}{ll}
\displaystyle
\frac{1}{2}\| z_k(t)\|^2_{L^2}+\frac{\gamma }{4}\int_0^t\| \nabla z_k(\tau )\| ^2_{L^2}d\tau \leq \frac{\beta }{2} \int_0^t\| z_k(\tau )\| ^2_{L^2}d\tau \vspace{2mm} \\
\displaystyle +\frac{\theta }{2} \int_0^T\int_{\mathbb{R}^d}\Big[ \Big| \frac{\partial \rho^{u^*}}{\partial x_j}(\tau ,x)\Big|^2|q_{ij}(\tau ,x,(Su_{n_k})(x))-q_{ij}(\tau ,x,(Su^*)(x))|^2 \vspace{2mm} \\
\displaystyle +|\rho^{u^*}(\tau ,x)|^2\Big| \frac{\partial q_{ij}}{\partial x_j}(\tau ,x,(Su_{n_k})(x))-\frac{\partial q_{ij}}{\partial x_j}(\tau ,x,(Su^*)(x))\Big| ^2 \vspace{2mm} \\
\displaystyle +|\rho^{u^*}(\tau ,x)|^2\Big| \frac{\partial q_{ij}}{\partial s}(\tau ,x,(Su_{n_k})(x))\frac{\partial (Su_{n_k})}{\partial x_j}(x)-\frac{\partial q_{ij}}{\partial s}(\tau ,x,(Su^*)(x))\frac{\partial (Su^*)}{\partial x_j}(x)\Big| ^2 \vspace{2mm} \\
\displaystyle +|\rho^{u^*}(\tau ,x)|^2|b(\tau ,x,(Su_{n_k})(x)) -b(\tau ,x,(Su^*)(x)) |^2 \Big] \, dx \, d\tau \, .
\end{array} 
\end{equation*}
If we denote by 
$$F_k(\tau ,x)= \Big| \frac{\partial \rho^{u^*}}{\partial x_j}(\tau ,x)\Big|^2|q_{ij}(\tau ,x,(Su_{n_k})(x))-q_{ij}(\tau ,x,(Su^*)(x))|^2 $$
$$+|\rho^{u^*}(\tau ,x)|^2\Big| \frac{\partial q_{ij}}{\partial x_j}(\tau ,x,(Su_{n_k})(x))-\frac{\partial q_{ij}}{\partial x_j}(\tau ,x,(Su^*)(x))\Big| ^2 $$
$$+|\rho^{u^*}(\tau ,x)|^2\Big| \frac{\partial q_{ij}}{\partial s}(\tau ,x,(Su_{n_k})(x))\frac{\partial (Su_{n_k})}{\partial x_j}(x)-\frac{\partial q_{ij}}{\partial s}(\tau ,x,(Su^*)(x))\frac{\partial (Su^*)}{\partial x_j}(x)\Big| ^2 $$
$$+|\rho^{u^*}(\tau ,x)|^2|b(\tau ,x,(Su_{n_k})(x)) -b(\tau ,x,(Su^*)(x)) |^2 \, ,$$
then we may infer (via Gronwall's lemma) that
$$\| z_k(t)\|^2_{L^2}\leq e^{\beta T}\theta \int_0^T\int_{\mathbb{R}^d}F_k(\tau ,x) \, dx \, d\tau , \quad \mbox{\rm for any } t\in [0,T] \, .$$
Note that  $b(Su_{n_k})-b(Su^*)$, $q_{ij}(Su_{n_k})-q_{ij}(Su^*)$, $\frac{\partial q_{ij}}{\partial x_j}(Su_{n_k})-\frac{\partial q_{ij}}{\partial x_j}(Su^*)$ and
$\frac{\partial q_{ij}}{\partial s}(Su_{n_k})\frac{\partial (Su_{n_k})}{\partial x_j}-\frac{\partial q_{ij}}{\partial s}(Su^*)\frac{\partial (Su^*)}{\partial x_j}$ are bounded and 
almost everywhere convergent to $0$.

So, by Lebesgue's dominated convergence theorem, we may conclude that
$$\int_0^T\int_{\mathbb{R}^d}F_k(\tau ,x) \, dx \, d\tau \longrightarrow 0 \, ,$$
and so
$$z_k\longrightarrow  0 \quad \mbox{\rm in } C([0,T];L^2(\mathbb{R}^d))$$
and 
$$\rho ^{u_{n_k}}\longrightarrow  \rho^{u^*} \quad \mbox{\rm in } C([0,T];L^2(\mathbb{R}^d)) .$$
The last convergence implies that
$$\int_0^T\int_{\mathbb{R}^d}G(t,x)\rho ^{u_{n_k}}(t,x)dx \, dt+\int_{\mathbb{R}^d}G_T(x)\rho ^{u_{n_k}}(T,x)dx $$
$$\longrightarrow  \int_0^T\int_{\mathbb{R}^d}G(t,x)\rho^{u^*}(t,x)dx \, dt+\int_{\mathbb{R}^d}G_T(x)\rho^{u^*}(T,x)dx \, .$$
On the other hand, the assumptions on $h$ allow us to conclude that
$$\liminf_{k\rightarrow +\infty }\int_{\mathbb{R}^d}h(u_{n_k}(x)) \, dx \geq \int_{\mathbb{R}^d}h(u^*(x)) \, dx$$
and consequently that 
$$m\geq \liminf_{k\rightarrow +\infty } J(u_{n_k})\geq J(u^*)\geq m,$$
which means that $u^*$ is an optimal control for $(P)$.
\end{proof}

\section{The maximum principle}

Assume in what follows that besides the previously considered hypotheses, we have that
\begin{itemize}
\item[\bf (H5)] $b$, $\sigma $ are time-independent, $b\in C^{0,2}_b(\mathbb{R}^d\times [0,+\infty);\mathbb{R}^d)$ and
$$\sigma _{ij}(x,s)=\tilde{\sigma }(x,s)\delta _{ij}, \quad \forall i,j\in \{ 1,2,...,d\} ,$$
where $\delta _{ij}$ is the Kronecker delta;
$$G_T\in H^1(\mathbb{R}^d) \, .$$
\end{itemize}

Let us denote by $q(x,s)=\tilde{\sigma}(x,s)^2$.
We shall use the simplified notations
$$b'(s(x)):=\frac{\partial b}{\partial s}(x,s(x)), 
\quad q'(s(x)):=\frac{\partial q}{\partial s}(x,s(x)) \, .$$

Let $u^*$ be an optimal control problem for $(P)$ and $p$ be the weak solution to 
\begin{equation}
\left\{ \begin{array}{ll}
\displaystyle
\frac{\partial p }{\partial t}(t,x)=-b(x,(Su^*)(x))\cdot \nabla p(t,x) \qquad & \\
\hspace{10mm} \displaystyle -\frac{1}{2}q(x,(Su^*)(x))
\Delta p(t,x)-G(t,x), &t\in (0,T), \ x\in \mathbb{R}^d ,\vspace{2mm}\\
p(T,x)=G_T(x), & x\in \mathbb{R}^d \, .
\end{array} \right. \label{dual}
\end{equation}
It follows exactly as in Theorem 5.4 in \cite{anita2} that
there exists a unique  $p\in L^2(0,T;H^2(\mathbb{R}^d))\cap C([0,T]; H^1(\mathbb{R}^d)) \cap W^{1,2}([0,T]; L^2(\mathbb{R}^d ))$ such that
$$\frac{d p }{d t}(t)=-b(\cdot ,(Su^*)(\cdot ))\cdot \nabla p(t) -\frac{1}{2}q(\cdot ,(Su^*)(\cdot ))
\Delta p(t)-G(t)$$
a.e. $t\in (0,T)$, in $L^2(\mathbb{R}^d)$ and
$$p(T,x)=G_T(x) \quad \mbox{\rm a.e. } x\in \mathbb{R}^d \, .$$
\vspace{3mm}

\begin{theorem} (Maximum Principle) 
If $u^*$ is an optimal control for (P), then
$$u^*(x)=\underset{w\in [0,M_0]}\argmin \ \Big\{ h(w)+{w}\int_0^T\int_{\mathbb{R}^d} K(x'-x)\frac{\partial b}{\partial s}\Big(x', \int_{\mathbb{R}^d}K(x'-x'')u^*(x'')dx''\Big) $$
$$\cdot \nabla p(t,x')\rho^{u^*}(t,x') \, dx' \, dt  $$
$$+\frac{w}{2}\int_0^T\int_{\mathbb{R}^d} K(x'-x)\frac{\partial q}{\partial s}\Big(x', \int_{\mathbb{R}^d}K(x'-x'')u^*(x'')dx''\Big) \rho^{u^*}(t,x')\Delta p(t,x') \, dx' \, dt \Big\} $$
a.e. $x\in \omega $.
\end{theorem}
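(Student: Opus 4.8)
The plan is to obtain the stated pointwise condition from a first--order convex variational inequality at $u^*$, eliminating the linearized density through the adjoint state $p$. Fix $v\in{\cal U}$; since ${\cal U}$ is convex, $u^\varepsilon:=u^*+\varepsilon(v-u^*)\in{\cal U}$ for every $\varepsilon\in(0,1]$, and, writing $\eta:=v-u^*$ (supported in $\omega$), linearity of $S$ gives $Su^\varepsilon=Su^*+\varepsilon\,S\eta$. The first step is to show that $\varepsilon^{-1}\bigl(\rho^{u^\varepsilon}-\rho^{u^*}\bigr)$ converges in $C([0,T];L^2(\mathbb{R}^d))$ to the solution $\zeta$, with $\zeta(0)=0$, of the linearized Fokker--Planck equation
\begin{multline*}
\frac{\partial \zeta}{\partial t} = -\nabla\cdot\bigl(b(x,(Su^*)(x))\zeta\bigr) + \tfrac{1}{2}\Delta\bigl(q(x,(Su^*)(x))\zeta\bigr) \\
- \nabla\cdot\Bigl(\tfrac{\partial b}{\partial s}(x,(Su^*)(x))\,(S\eta)(x)\,\rho^{u^*}\Bigr) + \tfrac{1}{2}\Delta\Bigl(\tfrac{\partial q}{\partial s}(x,(Su^*)(x))\,(S\eta)(x)\,\rho^{u^*}\Bigr).
\end{multline*}
This is established exactly by the energy-estimate-plus-Gronwall scheme of the existence proof (Theorem 2.1): one subtracts the two state equations, divides by $\varepsilon$, and controls the remainder using the $C^2_b$ dependence of $b,q$ on $s$ from (H2), (H5), together with Lemma 2.1, which yields $Su^\varepsilon\to Su^*$, $\nabla(Su^\varepsilon)\to\nabla(Su^*)$ and the first-order Taylor expansions of $b(\cdot,Su^\varepsilon)$ and $q(\cdot,Su^\varepsilon)$.

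Differentiating $J$ along this variation, the $\rho$-dependent terms contribute $\int_0^T\!\int_{\mathbb{R}^d} G\,\zeta\,dx\,dt + \int_{\mathbb{R}^d} G_T\,\zeta(T)\,dx$. I would then use the adjoint equation (\ref{dual}) to remove $\zeta$: substituting $G=-\partial_t p - b\cdot\nabla p - \tfrac12 q\Delta p$, integrating by parts in $t$ (using $\zeta(0)=0$ and $p(T)=G_T$) and in $x$ (the operator $b\cdot\nabla + \tfrac12 q\Delta$ being the formal $L^2$-adjoint of the Fokker--Planck operator $-\nabla\cdot(b\,\cdot)+\tfrac12\Delta(q\,\cdot)$), the two $Lp$-terms cancel and one is left with
\begin{multline*}
\int_0^T\!\!\int_{\mathbb{R}^d} G\,\zeta \, dx\,dt + \int_{\mathbb{R}^d} G_T\,\zeta(T)\,dx \\
= \int_0^T\!\!\int_{\mathbb{R}^d}\Bigl[\tfrac{\partial b}{\partial s}(x,(Su^*)(x))\cdot\nabla p + \tfrac{1}{2}\tfrac{\partial q}{\partial s}(x,(Su^*)(x))\,\Delta p\Bigr](S\eta)(x)\,\rho^{u^*}\,dx\,dt.
\end{multline*}
The Sobolev regularity $p\in L^2(0,T;H^2)\cap C([0,T];H^1)$ and $\rho^{u^*}\in L^2(0,T;H^1)$, with $\zeta$ of the same class, makes every boundary contribution at infinity vanish and every product integrable.

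Applying Fubini's theorem to the convolution $(S\eta)(x)=\int_{\mathbb{R}^d}K(x-x')\eta(x')\,dx'$ transfers the kernel onto the adjoint data and rewrites the right-hand side as $\int_\omega \eta(x)\,\Psi(x)\,dx$, where
\[
\Psi(x) = \int_0^T\!\!\int_{\mathbb{R}^d} K(x'-x)\Bigl[\tfrac{\partial b}{\partial s}(x',(Su^*)(x'))\cdot\nabla p(t,x') + \tfrac{1}{2}\tfrac{\partial q}{\partial s}(x',(Su^*)(x'))\,\Delta p(t,x')\Bigr]\rho^{u^*}(t,x')\,dx'\,dt
\]
is precisely $A(x)+\tfrac12 B(x)$ in the notation of the statement. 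Optimality of $u^*$ gives $\frac{d}{d\varepsilon}\big|_{0^+}J(u^\varepsilon)\ge 0$; since $\varepsilon\mapsto\int_\omega h(u^\varepsilon)\,dx$ is convex, its right derivative is bounded above by $\int_\omega\bigl(h(v)-h(u^*)\bigr)\,dx$, and combining these two facts yields
\[
\int_\omega\bigl[h(v(x)) + v(x)\,\Psi(x)\bigr]\,dx \ \ge\ \int_\omega\bigl[h(u^*(x)) + u^*(x)\,\Psi(x)\bigr]\,dx \qquad\forall\,v\in{\cal U}.
\]
Finally, because ${\cal U}$ allows independent pointwise choices in $[0,M_0]$ and $w\mapsto h(w)+w\,\Psi(x)$ is continuous on the compact set $[0,M_0]$, a standard pointwise (measurable-selection) argument decouples the minimization: if $u^*(x)$ failed to minimize $h(w)+w\Psi(x)$ on a set of positive measure, replacing it there by the minimizer would strictly decrease the left-hand side, a contradiction. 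Hence $u^*(x)=\argmin_{w\in[0,M_0]}\{h(w)+w\,\Psi(x)\}$ a.e. in $\omega$, which is the claim.

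The main obstacle is the rigorous linearization of the first step --- proving Gateaux differentiability of the control-to-state map $u\mapsto\rho^u$ with derivative $\zeta$ in $C([0,T];L^2)$ --- which requires uniform-in-$\varepsilon$ energy bounds and delicate treatment of the terms carrying $\frac{\partial q}{\partial s}\nabla(S\eta)$, exactly as in the compactness argument of the existence proof. A secondary point of care is the legitimacy of the spatial integrations by parts over all of $\mathbb{R}^d$, which rests on the stated Sobolev regularity of $p$ and $\rho^{u^*}$ to annihilate the boundary terms at infinity and to guarantee integrability of the products.
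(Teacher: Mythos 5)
Your proposal is correct in outline and reaches the stated characterization, but it follows a genuinely different route from the paper. The paper uses a spike (needle) variation: it sets $u_{\varepsilon}=w$ on a small ball $B(x_0;\varepsilon)\subset\omega$ and $u_{\varepsilon}=u^*$ elsewhere, keeps the \emph{exact} difference $z_{\varepsilon}=\rho^{u_{\varepsilon}}-\rho^{u^*}$ (never divided by a small parameter), eliminates $z_{\varepsilon}$ with the same adjoint state $p$, and then applies the mean value theorem to $b(\cdot,Su_{\varepsilon})-b(\cdot,Su^*)$ and $q(\cdot,Su_{\varepsilon})-q(\cdot,Su^*)$ before dividing by $m(B_{\varepsilon})$ and invoking Lebesgue differentiation to get the pointwise condition directly at a.e.\ $x_0$. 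Consequently the paper never has to prove Gateaux differentiability of the control-to-state map $u\mapsto\rho^u$: it only uses the continuity $\rho^{u_{\varepsilon}}\to\rho^{u^*}$ in $C([0,T];L^2)$, which is exactly the estimate already established in the existence proof. Your convex variation $u^{\varepsilon}=u^*+\varepsilon(v-u^*)$ instead requires the full linearization $\varepsilon^{-1}(\rho^{u^{\varepsilon}}-\rho^{u^*})\to\zeta$ --- the step you yourself flag as the main obstacle --- followed by a global variational inequality and a measurable-selection decoupling. That linearization is plausible under (H2) and (H5) and can be carried out by the same energy--Gronwall scheme, so I do not regard it as a fatal gap, but it is strictly more work than the paper demands, and it is the one substantive piece of your argument that is sketched rather than proved. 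What your route buys in exchange is the intermediate integral variational inequality $\int_{\omega}[h(v)+v\Psi]\,dx\geq\int_{\omega}[h(u^*)+u^*\Psi]\,dx$ for all $v\in{\cal U}$, which is of independent interest; what the paper's route buys is economy (no differentiability of the state map needed) and robustness to control sets that are not convex. The adjoint equation, the transposition of the kernel via $S^*$ (your Fubini step), and the final form of $\Psi$ coincide in both arguments.
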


\begin{proof}

For an arbitrary but fixed $x_0\in \omega $, there exists a positive $\varepsilon $ such that
$B_{\varepsilon }=B(x_0;\varepsilon )\subset \omega $. For an arbitrary but fixed $x_0\in \omega $ and for $\varepsilon >0$ sufficiently small
we define the following spike perturbation (\cite{barbu,sumin,anita}) of $u^*$
$$u_{\varepsilon }(x)=\left\{ \begin{array}{ll}
w, \quad & x\in B_{\varepsilon }, \\
u^*(x), \quad & x\in \omega \setminus B_{\varepsilon } ,
\end{array}
\right. $$
where $w\in [0,M_0]$ is arbitrary but fixed.
This yields
$$J(u^*)\leq J(u_{\varepsilon }), $$
which implies that
\begin{equation}\label{ineq}
0\leq \int_0^T\int_{\mathbb{R}^d}G(t,x)z_{\varepsilon }(t,x) \, dx \, dt+\int_{\mathbb{R}^d}G_T(x)z_{\varepsilon }(T,x) \, dx
+\int_{B_{\varepsilon }}[h(w)-h(u^*(x))] \, dx \, ,
\end{equation}
where $z_{\varepsilon }=\rho ^{u_{\varepsilon }}-\rho^{u^*}$. It is obvious that $z_{\varepsilon }$ is the unique weak solution 
(in $L^2(0,T;H^1(\mathbb{R}^d))\cap C([0,T]; L^2(\mathbb{R}^d))
\cap W^{1,2}(0,T; H^{-1}(\Omega ))$) to

\begin{equation*}
\left\{ \begin{array}{ll}
\displaystyle
\frac{\partial z }{\partial t}(t,x)=-\nabla \cdot [b((Su^*)(x))z(t,x)+(b((Su_{\varepsilon })(x))-b((Su^*)(x)))\rho ^{u_{\varepsilon }}(t,x)]  \vspace{2mm} \\
\displaystyle
 \hspace{10mm}+\frac{1}{2}\Delta \Big[ q((Su^*)(x)z(t,x)+[q((Su_{\varepsilon })(x))-q((Su^*)(x))]\rho ^{u_{\varepsilon }}(t,x) \Big],  \vspace{2mm} \\
~ \hspace{5cm} t\in (0,T), \ x\in \mathbb{R}^d , \\
z(0,x)=0,  \hspace{3cm} x\in \mathbb{R}^d \, .
\end{array} \right. \label{var}
\end{equation*}
By (\ref{dual}) we get that
$$0=\int_{\mathbb{R}^d}z_{\varepsilon }(T,x)G_T(x) \, dx-\int_0^T\int_{\mathbb{R}^d}z_{\varepsilon }(t,x)\frac{\partial p}{\partial t}(t,x) dx \, dt $$ 
$$-\int_0^T\int_{\mathbb{R}^d}[b((Su^*)(x)z_{\varepsilon }(t,x)+(b((Su_{\varepsilon })(x))-b((Su^*))(x))\rho ^{u_{\varepsilon }}(t,x)]\cdot \nabla p(t,x) \, dx \, dt $$
$$+\frac{1}{2}\int_0^T\int_{\mathbb{R}^d}\nabla p(t,x)\cdot \nabla \Big[ q((Su^*)(x))z_{\varepsilon }(t,x) +[q((Su_{\varepsilon })(x))-q((Su^*)(x))]\rho ^{u_{\varepsilon }}(t,x) \Big] \, dx \, dt $$
$$=-\int_0^T\int_{\mathbb{R}^d}[(b((Su_{\varepsilon })(x))-b((Su^*))(x))\rho ^{u_{\varepsilon }}(t,x)] \cdot \nabla p(t,x)dx \, dt$$
$$-\frac{1}{2}\int_0^T\int_{\mathbb{R}^d}[q((Su_{\varepsilon })(x))-q((Su^*)(x))]\rho ^{u_{\varepsilon }}(t,x) \Delta p(t,x) \, dx \, dt .$$
$$+\int_{\mathbb{R}^d}z_{\varepsilon }(T,x)G_T(x) \, dx+\int_0^T\int_{\mathbb{R}^d}G(t,x)z_{\varepsilon }(t,x) \, dx \, dt $$
and consequently
$$\int_0^T\int_{\mathbb{R}^d}G(t,x)z_{\varepsilon }(t,x) \, dx \, dt+\int_{\mathbb{R}^d}G_T(x)z_{\varepsilon }(T,x) \, dx$$
$$=\int_0^T\int_{\mathbb{R}^d}[(b((Su_{\varepsilon })(x))-b((Su^*))(x))\rho ^{u_{\varepsilon }}(t,x)] \cdot \nabla p(t,x)dx \, dt$$
$$+\frac{1}{2}\int_0^T\int_{\mathbb{R}^d}[q((Su_{\varepsilon })(x))-q((Su^*)(x))]\rho ^{u_{\varepsilon }}(t,x) \Delta p(t,x) \, dx \, dt .$$
By (\ref{ineq}) and the last computation, applying the main value theorem, we derive that
$$0\leq \int_{B_{\varepsilon }}[h(w)-h(u^*(x))] \, dx+\int_0^T\int_{\mathbb{R}^d}[(b((Su_{\varepsilon })(x))-b((Su^*))(x))\rho ^{u_{\varepsilon }}(t,x)] \cdot \nabla p(t,x)dx \, dt$$ 
$$+\frac{1}{2}\int_0^T\int_{\mathbb{R}^d}[q((Su_{\varepsilon })(x))-q((Su^*)(x))]\rho ^{u_{\varepsilon }}(t,x) \Delta p(t,x) \, dx \, dt $$
$$=\int_{B_{\varepsilon }}[h(w)-h(u^*(x))] \, dx
+\int_0^T\int_{\mathbb{R}^d}b'(\zeta _{\varepsilon}(x))((Su_{\varepsilon })(x)-(Su^*)(x))\rho ^{u_{\varepsilon }}(t,x)\cdot \nabla p(t,x)dx \, dt$$
$$+\frac{1}{2}\int_0^T\int_{\mathbb{R}^d}[q'(\xi _{\varepsilon } (x))((Su_{\varepsilon })(x)-(Su^*)(x))\rho ^{u_{\varepsilon }}(t,x) \Delta p(t,x) \, dx \, dt ,$$
where $\xi _{\varepsilon }(x), \zeta _{\varepsilon }(x)$ are convex combinations of $(Su_{\varepsilon })(x)$ and $(Su^*)(x)$.
It follows that
$$0\leq \int_{B_{\varepsilon }}[h(w)-h(u^*(x))] \, dx+\int_{B_{\varepsilon }}(w-u^*(x))S^*\Big( b'(\zeta _{\varepsilon })\cdot \int_0^T\rho ^{u_{\varepsilon }}(t)\nabla p(t) \, dt\Big)(x) \, dx$$
$$+\frac{1}{2}\int_{B_{\varepsilon }}(w-u^*(x))S^*\Big( q'(\xi _{\varepsilon })\int_0^T\rho ^{u_{\varepsilon }}(t)\Delta p(t) \, dt\Big)(x) \, dx $$
$$= \int_{B_{\varepsilon }}[h(w)-h(u^*(x))] \, dx
+\int_{B_{\varepsilon }}(w-u^*(x))S^*\Big( b'(Su^*)\cdot \int_0^T\rho^{u^*}(t)\nabla p(t) \, dt\Big)(x) \, dx $$
$$+\int_{\mathbb{R}^d}(S(u_{\varepsilon }-u^*))(x)\Big( [b'(\zeta _{\varepsilon }(x))-b'((Su^*)(x))]\cdot \int_0^T\rho ^{u_{\varepsilon }}(t,x)\nabla p(t) \, dt\Big)(x) \, dx $$
$$+\int_{\mathbb{R}^d}(S(u_{\varepsilon }-u^*))(x)\Big( b'((Su^*)(x))\cdot \int_0^T(\rho ^{u_{\varepsilon }}(t,x)-\rho^{u^*}(t,x))\nabla p(t) \, dt\Big) (x) \, dx  $$
$$ +\frac{1}{2}\int_{B_{\varepsilon }}(w-u^*(x))S^*\Big( q'(Su^*)\int_0^T\rho^{u^*}(t)\Delta p(t) \, dt\Big)(x) \, dx $$
$$+\frac{1}{2}\int_{\mathbb{R}^d}(S(u_{\varepsilon }-u^*))(x)\Big( [q'(\xi_{\varepsilon }(x))-q'((Su^*)(x))]\int_0^T\rho ^{u_{\varepsilon }}(t,x)\Delta p(t,x) \, dt\Big)(x) \, dx $$
$$+\frac{1}{2}\int_{\mathbb{R}^d}(S(u_{\varepsilon }-u^*))(x)\Big( q'((Su^*)(x))\int_0^T(\rho ^{u_{\varepsilon }}(t,x)-\rho^{u^*}(t,x))\Delta p(t,x) \, dt\Big) (x) \, dx \, ,$$
where $S^*\in L(L^2(\mathbb{R}^d);L^2(\mathbb{R}^d))$ is the adjoint of the operator $S$.

We have that
$$\frac{1}{m(B_{\varepsilon })} \int_{B_{\varepsilon }}[h(w)-h(u^*(x))] \, dx\longrightarrow h(w)-h(u^*(x_0)) $$
a.e. $x_0\in \omega $, as $\varepsilon \rightarrow 0$ and
$$\frac{1}{2m(B_{_\varepsilon })}\int_{B_{\varepsilon }}(w-u^*(x))S^*\Big( q'(Su^*)\int_0^T\rho^{u^*}(t)\Delta p(t) \, dt\Big)(x) \, dx $$
$$\longrightarrow \frac{1}{2} 
(w-u^*(x_0))S^*\Big( q'(Su^*)\int_0^T\rho^{u^*}(t)\Delta p(t) \, dt\Big)(x_0)$$
a.e. $x_0\in \omega $, as $\varepsilon \rightarrow 0$.

Note that
$$|(Su_{\varepsilon })(x)-(Su^*)(x)|=\Big| \int_{B_{\varepsilon }}K(x-x')(w-u^*(x')) \, dx'\Big| \leq M_0\| K\|_{L^{\infty }}m(B_{\varepsilon }) \, $$
for any $x\in \mathbb{R}^d$ and
$$|\xi_{\varepsilon }(x)-(Su^*)(x)|\leq  M_0\| K\|_{L^{\infty }}m(B_{\varepsilon }) \, . $$

It follows that there exists a positive constant $M_1$ such that 
$$ \frac{1}{2m(B_{\varepsilon })}\Big| \int_{\mathbb{R}^d}(S(u_{\varepsilon }-u^*))(x)\Big( [q'(\xi_{\varepsilon }(x))-q'((Su^*)(x))]\int_0^T\rho ^{u_{\varepsilon }}(t,x)\Delta p(t,x) \, dt\Big)(x) \, dx \Big| $$
$$\leq \frac{1}{2m(B_{\varepsilon })} M_1m(B_{\varepsilon })^2\int_0^T  \int_{\mathbb{R}^d} \rho ^{u_{\varepsilon }}(t,x)|\Delta p(t,x)| \, dx \, dt \longrightarrow 0, $$
as $\varepsilon \rightarrow 0$.
On the other hand, there exists a positive constant $M_2$ such that
$$\frac{1}{2m(B_{\varepsilon })}M_2m(B_{\varepsilon })\int_0^T\int_{\mathbb{R}^d}|\rho ^{u_{\varepsilon }}(t,x)-\rho^{u^*}(t,x))| \, |\Delta p(t,x)| \, dx \, dt $$
$$\leq \frac{M_2}{2}\| \rho ^{u_{\varepsilon }}-\rho^{u^*}\|_{L^2}\| \Delta p\| _{L^2}\longrightarrow 0 \, $$
as $\varepsilon \rightarrow 0$.

On the other hand
$$\frac{1}{m(B_{_\varepsilon })}\int_{B_{\varepsilon }}(w-u^*(x))S^*\Big( b'(Su^*)\cdot \int_0^T\rho^{u^*}(t)\nabla p(t) \, dt\Big)(x) \, dx $$
$$\longrightarrow 
(w-u^*(x_0))S^*\Big( b'(Su^*)\cdot \int_0^T\rho^{u^*}(t)\nabla p(t) \, dt\Big)(x_0)$$
a.e. $x_0\in \omega $, as $\varepsilon \rightarrow 0$.

Note that
$$|\zeta _{\varepsilon }(x)-(Su^*)(x)|\leq  M_0\| K\|_{L^{\infty }}m(B_{\varepsilon }) \, . $$

It follows that there exists a positive constant $M_2$ such that 
$$ \frac{1}{m(B_{\varepsilon })}\Big| \int_{\mathbb{R}^d}(S(u_{\varepsilon }-u^*))(x)\Big( [b'(\zeta _{\varepsilon }(x))-b'((Su^*)(x))]\cdot \int_0^T\rho ^{u_{\varepsilon }}(t,x)\nabla p(t,x) \, dt\Big)(x) \, dx \Big| $$
$$\leq \frac{1}{m(B_{\varepsilon })} M_2m(B_{\varepsilon })^2\int_0^T  \int_{\mathbb{R}^d} \rho ^{u_{\varepsilon }}(t,x)|\nabla p(t,x)| \, dx \, dt \longrightarrow 0, $$
as $\varepsilon \rightarrow 0$.
On the other hand, there exists a positive constant $M_3$ such that
$$\frac{1}{m(B_{\varepsilon })}M_3m(B_{\varepsilon })\int_0^T\int_{\mathbb{R}^d}|\rho ^{u_{\varepsilon }}(t,x)-\rho^{u^*}(t,x))| \, |\nabla p(t,x)| \, dx \, dt $$
$$\leq {M_3}\| \rho ^{u_{\varepsilon }}-\rho^{u^*}\|_{L^2}\| \nabla p\| _{L^2}\longrightarrow 0 \, $$
as $\varepsilon \rightarrow 0$.

If we take into account the last convergences, we get that
$$0\leq h(w)-h(u^*(x_0))$$
$$ +(w-u^*(x_0))S^*\Big( b'(Su^*)\cdot \int_0^T\rho^{u^*}(t)\nabla p(t) \, dt\Big)(x_0),$$
$$+\frac{1}{2}(w-u^*(x_0))S^*\Big( q'(Su^*) \int_0^T\rho^{u^*}(t)\Delta p(t) \, dt\Big)(x_0),$$
a.e. $x_0\in \omega $.

This implies (Maximum Principle) 
$$u^*(x)=\argmin_{w\in [0,M_0]}\Big\{ h(w)+{w}S^*\Big( b'(Su^*)\cdot \int_0^T\rho^{u^*}(t)\nabla p(t) \, dt\Big)(x)\Big\} $$
$$+\frac{w}{2}S^*\Big( q'(Su^*)\int_0^T\rho^{u^*}(t)\Delta p(t) \, dt\Big)(x)\Big\} $$
a.e. $x\in \omega $ and so we get the conclusion of the theorem.
\end{proof}

\section{Mathematical Finance application}
In what follows, we provide a financial application rooted in stochastic optimal control and leveraging the general case where \(\sigma = \sigma(t,x,s)\) is matrix-valued. In particular, we develop a robust framework for an optimal asset allocation problem in a mean-variance portfolio setting with diffusion control. This setup closely aligns with advanced financial contexts, where both drift and volatility terms are influenced by the control variable, specifically through an indirect mechanism of convolution.
Accordingly, we consider an investor who aims to maximize a utility-adjusted wealth objective over a finite time horizon \( [0, T] \) by optimally allocating funds across risky assets. In this context, the investor's control variable, \( u(x) \), influences both the drift \( b \) and volatility \(\sigma\). In particular, let the controlled wealth process \( X_t \) follows the SDE:
\[
dX_t = b(t, X_t, (Su)(X_t)) \, dt + \sigma(t, X_t, (Su)(X_t)) \, dW_t,
\]
where \( (W_t)_{t\in [0,T]} \) is a \(N\)-dimensional Brownian motion, while
\( u \in L^2(\omega) \) is the control variable influencing the portfolio’s drift and volatility and \((Su)(x)\) represents a nonlocal action of the control \( u \) through a convolution operation:
\[
(Su)(x) = \int_{\mathbb{R}^d} K(x - x') u(x') \, dx',
\]
where \( K(x - x') \) is a kernel satisfying regularity conditions mentioned in the theoretical part previously developed.
Then, the investor's objective is to minimize a mean-variance-based cost functional over the investment horizon:
\[
J(u) = \mathbb{E} \left[ \int_0^T G(t, X_t) \, dt + G_T(X_T) \right] + \int_{\omega} h(u(x)) \, dx,
\]
where:
 \( G(t, x) \) and \( G_T(x) \) represent running and terminal cost terms associated with the portfolio’s wealth and \( h(u(x)) \) is a convex penalization function on the control, ensuring regularization and feasibility.

To determine the existence and uniqueness of the optimal control \( u^(x) \), we employ the theoretical results on Fokker-Planck equations with nonlocal actions (see above). 
The associated optimal control problem thus translates to finding a control \( u \in {\cal U} \) such that \( J(u) \) is minimized subject to the controlled dynamics.

Under the assumptions in section 2 there exists at least one optimal control $u^*$.

The provided framework applies as follows: \( b \) represents the expected return rate influenced by the control \( u(x) \), modulating the investor’s exposure to market factors, while the diffusion matrix \( \sigma \) models the asset return volatility, directly controlled by \( u(x) \), allowing the investor to manage risk dynamically by adjusting \( \sigma \) over time and space and the penalization \( h(u) \) reflects costs or risks associated with high exposure levels, encouraging controls that optimize risk-adjusted returns.
Hence providing a mathematically rigorous basis for mean-variance optimization where both drift and volatility controls are nonlocally influenced, enabling an asset allocation strategy within a controlled stochastic environment.

To better specify, let us underline
that in a financial market, an investor might need to control their portfolio's exposure to certain market factors (e.g., sector-specific volatilities or macroeconomic variables) that do not act independently but rather have a nonlocal or interconnected impact across assets. Here, the control \( u(x) \) affects both the drift and the diffusion matrix through convolution:
   \[
   (Su)(x) = \int_{\mathbb{R}^d} K(x - x') u(x') \, dx',
   \]
   where \( K(x - x') \) captures the spatial dependencies across different components of the portfolio, e.g., \( K(x - x') \) could represent sectorial or asset correlations, reflecting how the control \( u \) in one area (e.g., one asset class or market sector) affects other areas. 
Moreover, the Fokker-Planck equation, as modified by the control, describes the evolution of the portfolio’s probability distribution, \(\rho(t, x)\), over time, and is sensitive to how \(\sigma\) varies with the control.
Hence, by leveraging provided results to the controlled Fokker-Planck equation (\ref{FP1}), 
where \( \sigma(t,x, (Su)(x)) \) is matrix-valued and depends on the control \( u \) in a nontrivial manner, the investor can predict how changes in the control \( u(x') \), applied spatially across the asset spectrum, will influence the entire portfolio distribution over time—a fundamental requirement for coherent risk management in mean-variance frameworks.

Finally, the stated maximum principle result provided is particularly valuable in portfolio optimization because it enables us to characterize the optimal control \( u^*(x) \) in terms of an integral representation (under appropriate hypotheses):
   \begin{equation*} 
\begin{split}
   & u^*(x) = \argmin_{w \in [0, M_0]} \Big\{ h(w) + w \int_0^T \int_{\mathbb{R}^d} K(x' - x) b'(Su^*)(x')   \\
 & \hspace{3cm} \cdot  \nabla p(t, x')\rho^{u^*}(t, x') \, dx' \, dt   \\ 
   & + \frac{w}{2} \int_0^T \int_{\mathbb{R}^d} K(x' - x) q'(Su^*)(x') \rho^{u^*}(t, x') \Delta p(t, x') \, dx' \, dt \Big\}
  \end{split}
\end{equation*}
which shows that \( u^*(x) \), the optimal spatial control, depends on both the drift and diffusion terms and incorporates the distributional state variables \( \rho^{u^*}(t, x') \) and \( \nabla p(t, x') \), effectively linking the current portfolio strategy to expected future distributions. In financial terms, this optimality condition enables investors to anticipate and adjust their controls based on future distributions of returns and risks, taking advantage of the nonlocal influence of each asset.

It is also worth mentioning that the nonlocal control effect encoded by \( (S u)(x) \) directly leads to an integral representation of the penalization term \( h(u(x)) \), which captures costs related to maintaining a desired risk level across the portfolio:
   \[
   J(u) = \int_0^T \mathbb{E} \left[ G(t, X_t) \right] dt + \mathbb{E}[G_T(X_T)] + \int_{\omega} h(u(x)) dx.
   \]
and since \( h \) is convex, this integral representation ensures that the investor minimizes the combined expected portfolio volatility and tracking error costs associated with implementing the control, \( u(x) \), over time.

\section{Final comments and further directions}

First of all, we note that the results in section 3 can be extended to the case when the following hypotesis holds instead of (H5):

\begin{itemize}
\item[\bf (H5')] $b$, $\sigma $ are time-independent, $b\in C^{0,2}_b(\mathbb{R}^d\times [0,+\infty);\mathbb{R}^d)$ and
$$G_T\in H^1(\mathbb{R}^d) \, .$$
\end{itemize}
Actually, the following result holds:
\vspace{2mm}

\it
If $p$ is the weak solution to 
\begin{equation*}
\left\{ \begin{array}{ll}
\displaystyle
\frac{\partial p }{\partial t}(t,x)=-b(x,(Su^*)(x))\cdot \nabla p(t,x) \qquad & \\
\hspace{10mm} \displaystyle -\frac{1}{2}q_{ij}(x,(Su^*)(x))
\frac{\partial ^2 p}{\partial x_i \partial x_j}(t,x)-G(t,x), &t\in (0,T), \ x\in \mathbb{R}^d ,\vspace{2mm}\\
p(T,x)=G_T(x), & x\in \mathbb{R}^d \, ,
\end{array} \right. \label{dual}
\end{equation*}
where $u^*$ is an optimal control for $(P)$, then 
$$u^*(x)=\underset{w\in [0,M_0]}\argmin \ \Big\{ h(w)+{w}\int_0^T\int_{\mathbb{R}^d} K(x'-x)\frac{\partial b}{\partial s}\Big(x', \int_{\mathbb{R}^d}K(x'-x'')u^*(x'')dx''\Big) $$
$$\cdot \nabla p(t,x')\rho^{u^*}(t,x') \, dx' \, dt  $$
$$+\frac{w}{2}\int_0^T\int_{\mathbb{R}^d} K(x'-x)\frac{\partial q_{ij}}{\partial s}\Big(x', \int_{\mathbb{R}^d}K(x'-x'')u^*(x'')dx''\Big) \rho^{u^*}(t,x')\frac{\partial ^2 p}{\partial x_i \partial x_j}(t,x') \, dx' \, dt \Big\} $$
a.e. $x\in \omega $. \rm
\vspace{2mm}

We chose to present the result in the form seen in section 3 in order to simplify the notation, make it easier to follow the proof and emphasize the main ideas. 

\vspace{8mm}

If one considers the particular drift and diffusion coefficients
$$b(t,x,u)=b_0(t,x)u, \quad \sigma (t,x,u)=\sigma _0(t,x)u \, ,$$
and appropriate assumptions on $b_0, \sigma _0$, 
and $Su=u$, ${\cal U}=\{ v\in L^{\infty }(\omega ) \ : 0\leq u(x)\leq M_0 \ \mbox{\rm a.e. } x\in \omega \}$,
then we expect to get the existence of the optimal control. 
Here ${\cal U}$ is not a compact subset of $L^2(\omega )$ but 
for any sequence $\{ u_n\}_{n\in \mathbb{N}^*} \in {\cal U}$, $u_n \longrightarrow u$ in the weak $*$ topology of $L ^{\infty}(\omega)$ we expect to get via Aubin's compactness theorem that on a subsequence 
(also indexed by $n$) we have $\rho ^{u_n}\longrightarrow \rho $ in $L^2(0,T;L _{loc}^2(\mathbb{R}^d))$  and that $\rho \equiv \rho ^u$ 
(the linearity of $b$ and $\sigma $ with respect to $u$ is crucial for that). An in-depth argument will allow us to obtain the convergence $\rho ^{u_n}\longrightarrow \rho ^u $ in $L^2(0,T;L ^2(\mathbb{R}^d))$. 
For specific control problems, these two convergences are sufficient to conclude the existence of an optimal control.  
The maximum principle follows as in section 3 and has a simpler form. 
\vspace{5mm}

It is of special interest the situation (see \cite{anita2,anita3}) when
$Su \in H^1(\mathbb{R}^d)$ is the unique weak solution to the following equation
\begin{equation}\label{lax}
\eta (x)-\Delta \eta (x)=u(x), \quad x\in \mathbb{R}^d \, ,
\end{equation}
i.e. $\eta \in H^1(\mathbb{R}^d)$ such that $\langle \eta ,\varphi \rangle _{H^1}=\langle u ,\varphi \rangle _{L^2}$, for any $\varphi \in H^1(\mathbb{R}^d)$.
The existence and uniqueness of a unique weak solution to (\ref{lax}) follows via Lax-Milgram's lemma. 
This situation corresponds to another control with nonlocal action.

The term $Su $ represents the nonlocal action of the control $u $. In the context of population dynamics
$\eta (x)$ represents the density per unit of time of an infusion (constant-in-time) of a substance which improves/ stimulates or diminishes the movement performance of the
given population of density $\rho $ or of the properties of the environment with respect to the movement. This substance diffuses, with a diffusion coefficient equal to $1$, and decays at a $1$ rate due to different causes. Denote the density 
of this substance at time $\tau$ and location $x$ by $\tilde{\eta }(\tau,x)$; $\tilde{\eta }\in L^2_{loc}([0,+\infty ); V)\cap W^{1,2}_{loc}([0,+\infty );H^{-1}(\mathbb{R}^d))$ is the weak solution to
$$\left\{ \begin{array}{ll}
\displaystyle
\frac{\partial \tilde{\eta }}{\partial t}(\tau ,x)=-\tilde{\eta }(\tau ,x)+\Delta \tilde{\eta }(\tau ,x)+u (x), \quad & x\in \mathbb{R}^d, \ \tau\in [0,+\infty ),  \vspace{2mm} \\
\tilde{\eta}(0,x)=0, & x\in \mathbb{R}^d .
\end{array}
\right. $$ 
For any $u\in {\cal U}$ we have that
$$\tilde{\eta }(\tau ,\cdot )-\eta \longrightarrow 0 \quad \mbox{\rm in } L^2(\mathbb{R}^d), \ \mbox{\rm as } \tau \rightarrow +\infty .$$
This convergence is as fast as $\exp (-\tau )$; therefore, after a sufficiently large time $\tau $, $\eta $ represents a good approximation of $\tilde{\eta }(\tau ,\cdot )$, hence we expect that a similar approach could be used.
\vspace{5mm} 

Another possible extension of our investigation regards the optimal control problems related to nonlinear or generalized Fokker-Planck equations with controls acting on the diffusion
(see e.g. \cite{dipersio}). A promising possibility is to work in the space $H^{-1}(\mathbb{R}^d)$ and to use a semigroup approach (as in \cite{anita3}).


\end{document}